\newcommand{\QQ}{{\mathbb{Q}}}
\newcommand{\bG} {\mathbf G}
\newcommand{\bT} {\mathbf T}
\newcommand{\cE} {\mathcal E}
\newcommand{\Aut}{{\operatorname{Aut}}}
\newcommand{\Out}{{\operatorname{Out}}}
\newcommand{\Soc}{{\operatorname{soc}}}
\newcommand{\SL}{{\operatorname{SL}}}
\newcommand{\PSL}{{\operatorname{PSL}}}
\newcommand{\PSU}{{\operatorname{PSU}}}
\newcommand{\tw}[1]{{}^{#1}\!}
\let\eps=\epsilon
\def\irr#1{{\rm Irr}(#1)}
\def\cent#1#2{{\bf C}_{#1}(#2)}
\def\syl#1#2{{\rm Syl}_#1(#2)}
\def\norm#1#2{{\bf N}_{#1}(#2)}
\def\oh#1#2{{\bf O}_{#1}(#2)}
\def\Oh#1#2{{\bf O}^{#1}(#2)}
\def\sbs{\subseteq}
\newtheorem{thm}{Theorem}[section]
\newtheorem{lem}[thm]{Lemma}
\newtheorem*{thmA}{Theorem A}
\theoremstyle{definition}
\theoremstyle{remark}
\begin{document}

\title[Brauer's Height Zero Conjecture for Principal Blocks]{Brauer's Height Zero Conjecture for Principal Blocks}

\date{\today}

\author{Gunter Malle}
\address{FB Mathematik, TU Kaiserslautern, Postfach 3049,
        67653 Kaisers\-lautern, Germany.}
\email{malle@mathematik.uni-kl.de}
\author{Gabriel Navarro}
\address{Departament de Matem\`atiques, Universitat de Val\`encia, 46100
        Burjassot, Val\`encia, Spain.}
\email{gabriel@uv.es}

\begin{abstract}
We prove {\it the other half} of Brauer's Height Zero Conjecture in the case of
principal blocks.
\end{abstract}

\thanks{The first author gratefully acknowledges financial support by SFB TRR
195. The research of the second author is supported by Ministerio de Ciencia e
Innovaci\'on PID2019-103854GB-I00 and FEDER funds. We thank Yanjun Liu,
Lizhong Wang, Wolfgang Willems and Jiping Zhang for e-mails prompting our
interest in the subject of this paper, and Yanjun Liu for pertinent questions
on an earlier version.}

\keywords{Brauer's Height Zero Conjecture, principal blocks}

\subjclass[2010]{20C20}

\maketitle


\section{Introduction}   \label{sec:intro}
 
Richard Brauer's famous Height Zero Conjecture \cite{Br55} from 1955 proposes
that if $p$ is a prime, $G$ is a finite group, and $B$ is a Brauer $p$-block of
$G$ with defect group $D$, then $D$ is abelian if and only if all the irreducible
complex characters of $G$ in $B$ have height zero. For abelian $D$, the proof
was finally completed in \cite{KM13}.
The other direction was proven to hold assuming that all simple groups
satisfy the \emph{inductive Alperin--McKay condition} (see Navarro--Sp\"ath
\cite{NS14}). Prior to this, the conjecture was shown for $p=2$ and blocks of
maximal defect by Navarro--Tiep \cite{NT}.
\medskip
 
The inductive Alperin--McKay condition (Sp\"ath \cite{S13}) is a natural but 
difficult-to-check statement on blocks of simple groups which involves action of
automorphisms and cohomology. In this paper, we take a step back and conduct a
direct reduction to almost simple groups of Brauer's conjecture for principal
blocks. Then we handle these groups via the classification of finite simple
groups, hence proving the conjecture in this case.
\medskip
 
We denote by $B_0(G)$ the principal $p$-block of $G$, and by $\irr{B_0(G)}$ the
set of complex irreducible characters of $G$ in $B_0(G)$.
 
\begin{thmA}
 Let $p$ be a prime and let $G$ be a finite group. Then the Sylow $p$-subgroups
 of $G$ are abelian if and only if $p$ does not divide $\chi(1)$ for all
 $\chi \in \irr{B_0(G)}$.
\end{thmA}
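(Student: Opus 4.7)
Proof plan. The forward direction (abelian Sylow $p$-subgroups imply $p\nmid\chi(1)$ for all $\chi\in\irr{B_0(G)}$) is a special case of Brauer's Height Zero Conjecture for blocks with abelian defect group, applied to the principal block, whose defect group coincides with a Sylow $p$-subgroup of $G$; this was settled in \cite{KM13}. For the converse I plan to take a minimal counterexample $G$: every $\chi\in\irr{B_0(G)}$ has $p'$-degree but a Sylow $p$-subgroup $P$ is non-abelian. Since every character in $B_0(G)$ contains $\oh{p'}{G}$ in its kernel, $\irr{B_0(G/\oh{p'}{G})}$ is identified with $\irr{B_0(G)}$ under inflation, so I may assume $\oh{p'}{G}=1$. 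For any $N\triangleleft G$, $\irr{B_0(G/N)}$ inflates into $\irr{B_0(G)}$, hence by minimality $G/N$ has abelian Sylow $p$-subgroups; the standard argument with two distinct minimal normal subgroups then produces a \emph{unique} minimal normal subgroup $M$ of $G$. Moreover, since $B_0(G)$ covers $B_0(M)$, Clifford's theorem yields $\theta(1)\mid\chi(1)$ for any $\theta\in\irr{B_0(M)}$ and any $\chi\in\irr{B_0(G)}$ lying over $\theta$, so $\irr{B_0(M)}$ consists entirely of $p'$-degree characters.

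Two cases then remain. If $M$ is abelian it is an elementary abelian $p$-group (since $\oh{p'}{G}=1$), so $\oh{p}{G}\neq 1$ and $P/\oh{p}{G}$ is abelian by the quotient argument. Here I would seek a contradiction by producing a character of $p$-divisible degree in $\irr{B_0(G)}$, analyzing the action of $G/\cent{G}{M}$ on $\irr{M}$ together with Fong-type reduction arguments for blocks covering $\oh{p}{G}$. If instead $M=S_1\times\cdots\times S_k$ with non-abelian simple factors, then $\cent{G}{M}=1$ (otherwise $\cent{G}{M}$ would contain the unique minimal normal subgroup $M$, impossible for non-abelian $M$), so $G$ embeds in $\Aut(M)$. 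Since $B_0(M)=B_0(S_1)\otimes\cdots\otimes B_0(S_k)$, each $B_0(S_i)$ consists of $p'$-degree characters; applying the forthcoming almost simple case to $S:=S_1$ yields abelian $\syl{p}{S}$, after which analyzing the wreath-product action of $G/M$ on $M$ should reduce matters to the almost simple case proper.

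The principal obstacle is then the almost simple case: given $S\triangleleft A\leq\Aut(S)$ with $S$ simple non-abelian, and assuming every $\chi\in\irr{B_0(A)}$ has $p'$-degree, one must deduce that $\syl{p}{A}$ is abelian. For $p=2$ this is already \cite{NT}. For odd $p$, the plan is to invoke the classification of finite simple groups: alternating and sporadic groups are dispatched by direct inspection of character tables, while groups of Lie type require the main technical work. One must exhibit characters of $p$-divisible degree in $\irr{B_0(A)}$ whenever $\syl{p}{A}$ is non-abelian, splitting into the defining-characteristic case (where Steinberg-type constructions provide the requisite characters) and the cross-characteristic case (where unipotent characters, Harish--Chandra induction from Levi subgroups, and the known description of principal block characters via Lusztig series supply them).
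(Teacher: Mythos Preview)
Your overall skeleton---minimal counterexample, $\oh{p'}{G}=1$, unique minimal normal subgroup, dichotomy on whether it is a $p$-group or a product of non-abelian simples---matches the paper, but two of the steps you flag as routine are where the actual content lies, and your plan does not supply it. First, the case where $M$ is an elementary abelian $p$-group is not dispatched by ``Fong-type reduction arguments''; the paper handles it in two stages (central $N$, then non-central) using Gallagher together with the It\^o--Michler theorem applied to $G/\cent{G}{N}$, and this is not a triviality. Second, and more seriously, your passage from the non-abelian socle case to ``the almost simple case proper'' is a genuine gap. The paper does \emph{not} work with an arbitrary almost simple $A$: it reduces all the way to $A=S\langle\sigma\rangle$ with $A/S$ a \emph{$p$-group}, and getting there requires eliminating the $p'$-part of $G/N$. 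That elimination uses the Alperin--Dade theory of isomorphic blocks to show that the principal-block hypothesis descends from $G$ to $NP$ when $G/N$ has a normal $p$-complement; a wreath-product bookkeeping argument alone will not do this, because characters in $B_0(NP)$ are not a priori restrictions of characters in $B_0(G)$.

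The reduction to $A/S$ a $p$-group is not cosmetic: it dictates the entire strategy in the almost simple case. With $A/S$ a $p$-group, it suffices to exhibit a character in $\irr{B_0(S)}$ that is \emph{not} $A$-invariant (Clifford then forces $p\mid\chi(1)$ for any $\chi\in\irr{B_0(A)}$ above it). This is what the paper does, via a classification of the simple $S$ with abelian Sylow $p$-subgroups admitting $p$-automorphisms (essentially field automorphisms plus three small exceptions) and a duality argument showing the field automorphism moves some Lusztig series $\cE(G,s)$ with $s$ a $p$-element. Your proposed route---directly producing $p$-divisible-degree characters in $\irr{B_0(A)}$ via unipotent characters and Harish--Chandra induction---would require controlling blocks of disconnected-centre groups $A$, which is substantially harder and not what is done here.
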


In Problem 12 of his famous list \cite{Br63} from 1963 Brauer asked if
the character table of a finite group $G$ detects if $G$ has abelian Sylow
$p$-subgroups. Although this has been previously solved in \cite{KimSa}
(theoretically) and \cite{NST} (with an algorithm), Theorem~A gives the solution
to Problem 12 that surely Brauer had in mind.
\medskip

A crucial ingredient in our proof is the result by Kessar--Malle \cite{KM17}
that Brauer's Height Zero Conjecture holds for all quasi-simple groups.

At several points of the proof we do use special properties of principal blocks,
such as the Alperin--Dade theory of isomorphic blocks, which do not hold true for
general blocks.

\section{Reduction to Almost Simple Groups}

\begin{thm}   \label{thm:red}
 Brauer's Height Zero Conjecture is true for principal $p$-blocks if it is true
 for the principal $p$-block of every almost simple group $S$ such that
 $S/\Soc(S)$ is a $p$-group.
\end{thm}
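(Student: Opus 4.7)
The plan is to argue by contradiction from a minimal counterexample $(G,p)$, namely a group $G$ with nonabelian Sylow $p$-subgroup $P$ satisfying $p\nmid\chi(1)$ for every $\chi\in\irr{B_0(G)}$, and with $|G|$ minimal with these properties. The aim is to exhibit, starting from $G$, an almost simple group $L$ with $L/\Soc(L)$ a $p$-group having the same two properties, which would contradict the hypothesis of the theorem.

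A first reduction removes the $p'$-core: if $1\neq N=\oh{p'}{G}$, then Brauer's Third Main Theorem gives that $B_0(G)$ covers $B_0(N)=\{1_N\}$, so $N\leq\ker\chi$ for every $\chi\in\irr{B_0(G)}$, and hence $\irr{B_0(G)}=\irr{B_0(G/N)}$ via inflation. The Sylow $p$-subgroups of $G/N$ are isomorphic to those of $G$, so $G/N$ would be a smaller counterexample, contradicting minimality; thus $\oh{p'}{G}=1$. Next, for any $1\neq N\trianglelefteq G$, inflation yields $\irr{B_0(G/N)}\subseteq\irr{B_0(G)}$, so $B_0(G/N)$ consists of $p'$-degree characters, and by minimality $G/N$ has abelian Sylow $p$-subgroups; equivalently $[P,P]\leq N$. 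Since $P$ is nonabelian, $[P,P]\neq 1$ lies in every nontrivial normal subgroup, so $G$ has a unique minimal normal subgroup $N_0$ with $[P,P]\leq N_0$, and $\Soc(G)=N_0$.

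The structure of $N_0$ splits the argument. If $N_0$ is elementary abelian $p$, uniqueness of $N_0$ forces the layer $E(G)$ to be trivial (any component would supply another minimal normal subgroup), so $F^{*}(G)=\oh{p}{G}$ and $G$ is $p$-constrained; a Gluck--Wolf type analysis for $p$-solvable groups, combined with the abelian-defect direction of Brauer's conjecture due to Kessar--Malle, should rule this case out. If instead $N_0=S_1\times\cdots\times S_k$ with the $S_i$ all isomorphic to a nonabelian simple group $S$, then uniqueness of $N_0$ gives $\cent{G}{N_0}=1$, so $G\hookrightarrow\Aut(S)\wr\fS_k$. I would then pass to $L=\norm{G}{S_1}/\prod_{j\neq 1}S_j$, almost simple with socle isomorphic to $S$, and further to $L_0=\Soc(L)\cdot Q$ for a Sylow $p$-subgroup $Q$ of $L$; this $L_0$ is almost simple with $L_0/\Soc(L_0)$ a $p$-group. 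The Alperin--Dade theory of isomorphic blocks, which holds for the principal block but not for general blocks, would then be used to verify that $\irr{B_0(L_0)}$ still consists of $p'$-degree characters and that $L_0$ still has nonabelian Sylow $p$-subgroups, giving a counterexample within the class covered by the hypothesis.

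The principal obstacle is this last transfer step. Upward passage to quotients is handled cleanly by Brauer's Third Main Theorem and inflation; but passing downward, first to the almost simple $L$ and then to $L_0=\Soc(L)\cdot Q$, must preserve both the nonabelianness of a Sylow $p$-subgroup and the $p'$-degree property of the principal block. This is exactly where the Alperin--Dade theory of isomorphic blocks is essential and why, unlike the general Alperin--McKay reduction, the argument remains tied to the principal block.
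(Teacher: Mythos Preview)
Your outline matches the paper's skeleton (minimal counterexample, $\oh{p'}G=1$, unique minimal normal subgroup $N_0$, split on its type), but both branches of the case split contain genuine gaps. When $N_0$ is a $p$-group you assert $p$-constrainedness and invoke a ``Gluck--Wolf type analysis for $p$-solvable groups,'' but $p$-constrained does not give $p$-solvable and you supply no argument for the latter. The paper handles this quite differently: the central case is done by a direct lifting argument showing every $\tau\in\irr P$ is linear; in the noncentral case one applies It\^o--Michler to $\irr{G/\cent{G}{N_0}}\subseteq\irr{B_0(G)}$ to force $G/\cent{G}{N_0}$ to be a $p$-group (using $\Oh{p'}G=G$), and then uses the structure of $\Oh{p'}{\cent{G}{N_0}}$ (a proper normal subgroup, hence with abelian Sylow $p$-subgroups and trivial $p'$-core) together with the uniqueness of $N_0$ to obtain $p$-solvability, whence $\irr{B_0(G)}=\irr G$ and a second application of It\^o--Michler finishes.

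In the nonabelian case your passage to $L=\norm{G}{S_1}/\prod_{j\ne1}S_j$ and then to $L_0=\Soc(L)Q$ is not justified, and not merely at the ``transfer step'' you flag: since $\norm{G}{S_1}$ is not normal in $G$ you have no covering result placing $\irr{B_0(L_0)}$ under characters of $\irr{B_0(G)}$, and you give no reason why a Sylow $p$-subgroup of $L_0$ should remain nonabelian. The paper avoids any wreath-product descent. Using It\^o--Michler on $\irr{G/N_0\cent{G}{Q}}$ and the structure of $G/N_0$, it sets up a dichotomy: either $G/L$ (with $L/N_0=\oh{p'}{G/N_0}$) is a nontrivial direct product of nonabelian simple groups, which is eliminated via Schreier's conjecture, or $G/N_0$ has a normal $p$-complement $L/N_0$. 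In the latter case Alperin--Dade supplies canonical extensions $\hat\gamma\in\irr{B_0(L)}$ of each $\gamma\in\irr{B_0(N_0)}$, and the Isaacs restriction bijection $\irr{G\mid\hat\gamma}\to\irr{PN_0\mid\gamma}$ transfers the $p'$-degree hypothesis down to $PN_0$; by minimality $G=PN_0$, and then a short invariance argument (any $\theta=\tau\times1\times\cdots\times1\in\irr{B_0(N_0)}$ must be $G$-invariant) forces $N_0$ simple. Thus Alperin--Dade is used to pass from $G$ to $PN_0$ through the normal $p$-complement $L$, not to descend inside a wreath-product embedding as you propose.
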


\begin{proof}
Let $G$ be a finite group, and let $P\in \syl pG$. We assume that $p$ does not
divide the degrees of the complex irreducible characters in $B_0(G)$. We argue
by induction on $|G|$ that $P$ is abelian.

Let $1<N<G$ be a normal subgroup of $G$. Since
$\irr{B_0(G/N)} \sbs \irr{B_0(G)}$, we have that $G/N$ has abelian
Sylow $p$-subgroups, by induction. If $\theta \in \irr{B_0(N)}$,
then there exists $\chi \in \irr{B_0(G)}$ such that $\theta$ is an irreducible
constituent of the restriction $\chi_N$ (by \cite[Thm.~9.4]{N}).
Since $\theta(1)$ divides $\chi(1)$, we have that $N$ has abelian Sylow
$p$-subgroups, by induction.  In particular, we have that $\Oh{p'} G=G$.
If $G$ has two different minimal normal subgroups $N_1$ and $N_2$,
then $G$ is isomorphic to a subgroup of $G/N_1 \times G/N_2$ that has abelian
Sylow $p$-subgroups, and therefore $G$ has abelian Sylow $p$-subgroups.
So we may assume that $G$ has a unique minimal normal subgroup $N$.

Since $\irr{B_0(G)}= \irr{B_0(G/\oh{p'}G}$ (by \cite[Thm.~6.10]{N}), we may
assume by induction that $\oh{p'} G=1$. 

Recall that if a finite group $H$ has abelian Sylow $p$-subgroups and
$\oh{p'} H=1$, then $\Oh{p'} H=\Oh {pp'} H \times \oh pH$, where $\Oh {pp'}H=\Oh p{\Oh{p'} H}$ is either
trivial or the direct product of non-abelian simple groups of order divisible by
$p$, with abelian Sylow $p$-subgroups, and $\oh pH$ is abelian.
(See for instance, \cite[Thm.~2.1]{NT2}.) Hence, if $M$ is any proper
normal subgroup of $G$, then $\Oh{p'} M$ is a direct product of $\oh pM$ and
$\Oh{p'p} M$, and this latter is either trivial or a direct product of simple
groups of order divisible by $p$. Also, again, if $L/N=\oh{p'}{G/N}$, 
we have that $G/L=X/L \times Y/L$, where $X/L$ is a direct product of simple
groups of order divisible by $p$ (or trivial), and $Y/L$ is a $p$-group. 

Suppose first that $N$ is a central $p$-group. Let $\tau\in \irr P$.
Write $\tau_N=e \lambda$, where $\lambda \in \irr N$. By \cite[Thm.~9.4]{N}, let
$\gamma_0 \in \irr{B_0(\norm GP})$ over $\tau$. By \cite[Cor.~6.4]{N} and
Brauer's Third Main theorem, there is some $\hat\tau \in \irr{B_0(G)}$ over
$\gamma_0$. Thus $\hat\tau$ has $p'$-degree. Hence, $\hat\tau_P$ contains some
$p'$-degree character $\gamma$, and $\gamma_N=\lambda \in \irr N$ by
\cite[Cor.~11.29]{Is06}. Since $P/N$ is abelian by induction, by Gallagher's
Corollary 6.17 of \cite{Is06} we have $\tau=\gamma \rho$ for some linear
$\rho\in\irr{P/N}$. Hence, all irreducible characters of $P$ are linear, 
and we are done in this case. 

Suppose next that $N$ is a $p$-group. Let $C=\cent GN$. Then all characters in
$\irr{G/C}$ are in the principal $p$-block of $G$ (by \cite[Lemma 3.1]{NT} and
Brauer's third main theorem). By the It\^o--Michler
theorem \cite{Mi86} and our hypothesis, we have that $G/C$ has a normal
Sylow $p$-subgroup. Since $\Oh{p'}G=G$, we have that $G/C$ is a $p$-group.
By the previous paragraph, we may assume that $C<G$. Hence, $\Oh{p'} C$ is a
direct product of $\Oh{p'p} C$ and $\oh pC$. These are normal subgroups of $G$,
since they are characteristic in $C$. Since $G$ has a unique minimal normal subgroup and
$\oh{p}C>1$, we conclude that $\Oh{p'} C$ is a $p$-group. In this case, $G$ is
$p$-solvable and $\irr{B_0(G)}=\irr G$ (by \cite[Thm.~10.20]{N}, for instance).
Again by the It\^o--Michler theorem, we conclude that $G$ has a normal Sylow
$p$-subgroup. Hence $G=P$ as $\Oh{p'} G=G$, and again we are done. 
 
Thus we may assume that $\oh pG=1$. Hence, $N={\bf F^*}(G)$ is a direct product
of non-abelian simple groups of order divisible by $p$. Also, if $M$ is any
proper normal subgroup of $G$, then $\Oh{p'} M$ is a direct product of
non-abelian simple groups of order divisible by~$p$.

Let $Q=P\cap N \in \syl pN$. We have that the irreducible characters of
$\irr{G/N\cent GQ}$ are in the principal block (again by \cite[Lemma 3.1]{NT}
and Brauer's third main theorem), so $G/N\cent GQ$ is a $p$-group by the
It\^o--Michler theorem, and using that $\Oh{p'} G=G$. Then $L \sbs N\cent GQ$.
Hence, $L=N\cent LQ$.

Since $X$ is not a $p'$-group (because $\oh{p'}G=1$), we have that
$N\sbs\Oh{p'}X$. Assume that $X<G$. Since $\Oh{p'}{X}$ is a direct product of
non-abelian simple groups and $N \unlhd\Oh{p'}{X}$, we have
$\Oh{p'}{X}=N \times U$, by elementary group theory. But then
$U=1$, because $N={\bf F^*}(G)$ and $\cent GN \sbs N$. Thus $\Oh{p'}X=N$.
Since $X/L$ is a direct product of non-abelian simple groups of order divisible
by $p$ (or trivial) and $N\sbs L\sbs X$, we deduce that $X=L$ and therefore
$Y=G$. Thus $G/N$ has a normal $p$-complement $L/N$. We claim that $PN$
satisfies the hypotheses.
Let $\tau \in \irr{B_0(PN)}$. Let $\gamma \in \irr N$ be under $\tau$. Then
$\gamma$ lies in the principal block of $N$. Since $\tau$ lies under some
character in the principal block of $G$, we have that $\gamma$ has $p'$-degree
using the hypothesis. By the Alperin--Dade theory on isomorphic blocks
(\cite{Al,D}), $\gamma$ has a canonical extension $\hat\gamma\in\irr L$ in the
principal block of $L$.  Now,
$\hat\gamma$ lies under some $\chi \in \irr{B_0(G)}$. By hypothesis, $\chi$ has
$p'$-degree, and therefore $\chi_L=\hat\gamma$ (by \cite[Cor.~11.29]{Is06}), and $\hat\gamma$ is
$G$-invariant. In particular, $\gamma$ is $G$-invariant too. By the Isaacs character correspondence (\cite[Cor.~4.2]{IS})
there is a bijection
$$\irr{G|\hat\gamma} \rightarrow \irr{PN|\gamma} \, ,$$
given by restriction. Since $\tau$ lies over $\gamma$, let $\hat\tau\in \irr G$
over $\hat\gamma$ that restricts to $\tau$. Since $G/L$ is a $p$-group and $\hat\tau$ lies over $\hat\gamma$,
we have
that $\hat\tau$ is in the principal $p$-block of $G$ (\cite[Cor.~9.6]{N}), thus $\tau$ has
$p'$-degree. So, by induction, we may assume that $G=NP$. Let $S$ be normal
simple in $N$ and write $N=S^{x_1}\times\cdots\times S^{x_t}$, where
$P=\norm PSx_1\cup\ldots\cup\norm PS x_t$. Now, let $1\ne\tau\in \irr{B_0(S)}$,
and consider $\theta=\tau\times 1\times\cdots\times 1\in\irr{B_0(N)}$.
Let $\chi\in\irr {B_0(G)}$ over $\tau$.
Then $\chi$ has $p'$-degree, so $\chi_N=\theta$. Then $\theta$ is invariant. But
this is impossible unless $t=1$. Thus we obtain that $G=NP$ is almost simple,
with $\Soc(G)=N=S$, and $G/N$ a $p$-group.

We are left with the case that $X=G$. Hence $G/L$ is a direct product of
non-abelian simple groups of order divisible by $p$. Since $L \sbs N\cent GQ$ 
and $G/N\cent GQ$ is a $p$-group, we have that $G=N\cent GQ$.
Write $N=S^{z_1}\times\cdots\times S^{z_u}$, where $S$ is simple non-abelian,
and $\{S^{z_1}, \ldots, S^{z_u}\}$ are the different $G$-conjugates of $S$. 
Now, $Q\cap S\in\syl p S$. If
$1\ne x\in Q\cap S$ and $c\in\cent GQ$, then $x^c=x\in S$. In particular,
$c\in\norm GS$, and therefore $S\unlhd G$. Hence $S=N$. By the Schreier
theorem, we have that $G/N$ is solvable, but this is not possible.
\end{proof}

As pointed out by the referee, the proof of Theorem \ref{thm:red} could be
shortened somewhat by using the theory of the so called $p^*$-groups
(see \cite{Wi86}).

\section{Almost Simple Groups}
We now deal with almost simple groups with $p$-automorphisms.
The first observation follows from the classification of finite simple groups.
We refer the reader to \cite[\S2.5]{GLS} for a description of the outer
automorphism groups of the groups in question.

\begin{lem}   \label{lem:aut}
 Let $S$ be non-abelian simple with abelian Sylow $p$-subgroups and suppose that
 $S$ has a non-trivial outer automorphism $\sigma$ of $p$-power order. Then $S$
 is of Lie type and either $\sigma$ is a field automorphism and $p$ does not
 divide the order of the group of outer diagonal automorphisms of $S$, or one of
 the following holds:
 \begin{itemize}
  \item[\rm(1)] $p=2$ and $S=\PSL_2(q)$ with $q\equiv3,5\pmod8$;
  \item[\rm(2)] $p=3$ and $S=\PSL_3(q)$ with $q\equiv4,7\pmod9$; or
  \item[\rm(3)] $p=3$ and $S=\PSU_3(q)$ with $q\equiv2,5\pmod9$.
\end{itemize}
\end{lem}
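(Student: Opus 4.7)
I would prove this by a case-by-case analysis based on the classification of finite simple groups and the description of $\Out(S)$ recorded in \cite[\S2.5]{GLS}. First I would dispose of the non-Lie-type simple groups. For sporadic $S$, $|\Out(S)|\le 2$, so $\sigma$ has order $2$ and $p=2$; an inspection of the ATLAS shows that no sporadic group has both abelian Sylow $2$-subgroups and a non-trivial outer automorphism. For alternating $A_n$ with $n\ge 5$, $\Out(A_n)$ is a $2$-group (of order $2$, or $4$ when $n=6$), so again $p=2$, and the only such $A_n$ with abelian Sylow $2$-subgroup is $A_5\cong\PSL_2(5)$, which is the $q=5$ instance of case~(1).

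Hence $S$ is of Lie type, defined over $\mathbb{F}_q$ with $q=r^a$. If $p=r$, a Sylow $p$-subgroup of $S$ coincides with a maximal unipotent subgroup, which is abelian only for $S\cong\PSL_2(q)$. In that case there are no graph automorphisms and the outer diagonal group has order $\gcd(2,q-1)$, coprime to $r=p$, so $\sigma$ must be a pure field automorphism with $p\nmid|D|$---the first alternative. Assume now $p\ne r$. I would decompose $\sigma$ into its diagonal, field, and graph components. A non-trivial graph component forces $p=2$ (types $A_n$ with $n\ge2$, $D_n$, $E_6$) or $p=3$ (triality on $D_4$); in each of these cases, the known Sylow descriptions for classical and exceptional groups of Lie type (in terms of $p$-parts of cyclotomic values $\Phi_d(q)$ and of relevant Levi subgroups) show that $S$ has non-abelian Sylow $p$-subgroups, contradicting the hypothesis. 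Hence the graph part of $\sigma$ is trivial; if moreover $p\nmid|D|$, modifying $\sigma$ by a suitable diagonal automorphism makes it a pure field automorphism, again landing in the first alternative.

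The only remaining possibility is $p\mid|D|$. For types $B$, $C$, $D$, ${}^2D$, $E_7$ one has $p=2$ and the Sylow $2$-subgroup is seen to be non-abelian in all relevant cases, while for $E_6$ and ${}^2E_6$ the prime $p=3$ similarly fails. Thus $S=\PSL_n(q)$ or $\PSU_n(q)$ with $p\mid\gcd(n,q\mp1)$. A direct Sylow analysis of these groups---taking the $p$-part of a split or twisted diagonal torus together with a Sylow $p$-subgroup of the Weyl group and then passing to the quotient by the central subgroup of order $\gcd(n,q\mp1)$---shows that abelianness forces $n=p$ together with $(q\mp1)_p=p$, which translates precisely into the congruences listed in (1)--(3). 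I expect the main technical obstacle to be the verification in the non-defining characteristic graph case, namely checking that no classical or exceptional type carrying a $p$-power graph automorphism can have abelian Sylow $p$-subgroups; but each such instance reduces to a well-documented Sylow description for finite groups of Lie type.
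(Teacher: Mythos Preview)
Your plan is correct and follows the same overall strategy as the paper: eliminate sporadic and alternating groups, then analyse $\Out(S)$ for groups of Lie type via its diagonal/field/graph decomposition. The organisation differs in one useful respect. The paper disposes of the entire case $p=2$ in one stroke by invoking Walter's classification \cite{Wa69} of simple groups with abelian Sylow $2$-subgroups (only $\SL_2(2^f)$, $\PSL_2(q)$ with $q\equiv3,5\pmod8$, $\tw2G_2(q)$ and $J_1$ arise, and all outer automorphisms of the first and third are field automorphisms with $|D|=1$). It then treats only $p>2$, where graph automorphisms of $p$-power order exist solely for $D_4$ with $p=3$, and diagonal automorphisms of $p$-power order only for types $A$, $\tw2A$, $E_6$, $\tw2E_6$. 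Your route instead keeps $p$ arbitrary and splits according to defining versus non-defining characteristic and then graph/diagonal components; this is equally valid but forces you to run through many more $p=2$ Sylow verifications (types $A_{\ge2}$, $D$, $E_6$ for graph, and $B$, $C$, $D$, $\tw2D$, $E_7$, $A$, $\tw2A$ for diagonal), all of which Walter's theorem handles at once.

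One small slip to fix: your stated endgame condition ``$n=p$ and $(q\mp1)_p=p$'' is correct for $p=3$ (it gives exactly $q\equiv4,7\pmod9$ for $\PSL_3$ and $q\equiv2,5\pmod9$ for $\PSU_3$), but not for $p=2$. For $\PSL_2(q)$ with $q$ odd the condition $(q-1)_2=2$ means $q\equiv3\pmod4$, i.e.\ $q\equiv3,7\pmod8$, whereas abelian Sylow $2$-subgroups require $|\PSL_2(q)|_2=4$, i.e.\ $(q^2-1)_2=8$, which is $q\equiv3,5\pmod8$. So the torus you look at depends on $q\bmod4$, and your uniform formula does not capture case~(1); either correct the formula or, as the paper does, cite Walter for $p=2$ and restrict the torus argument to $p\ge3$.
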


\begin{proof}
If $S$ is sporadic or alternating, then $\Out(S)$ has 2-power order, so $p=2$.
The only such $S$ with abelian Sylow 2-subgroups is $J_1$, but $\Out(J_1)=1$. 
\par
Thus, by the classification of finite simple groups, $S$ is of Lie type. Here,
if $p=2$, then by the well-known classification by Walter \cite{Wa69}, $S$ is
one of $\SL_2(2^f)$ with $f\ge2$, $\PSL_2(q)$ with $q\equiv3,5\pmod8$, or
$^2G_2(3^{2f+1})$ with $f\ge1$. The groups not occurring in the conclusion do
not have outer automorphisms that are not field automorphisms. Now assume that
$p>2$. Here $\sigma$ is a product of diagonal, graph and field automorphisms.
Non-trivial graph automorphisms of order $p\ge3$ only exist for groups of type
$D_4(q)$, but here Sylow 3-subgroups are non-abelian (since, for example,
$\PSL_4(q)$ is a section). Non-trivial outer
diagonal automorphisms of order $p\ge3$ only exist for groups of types $A_n$
and $E_6$. For $S=E_6(q)$ or $\tw2E_6(q)$ these have order~3, but Sylow
3-subgroups of $S$ are non-abelian. For $S=\PSL_n(q)$ or $\PSU_n(q)$ the outer
diagonal automorphisms have order dividing $n$, but Sylow $p$-subgroups of $S$
for $3\le p|n$ are non-abelian, unless $n=p=3$. The latter cases appear in
our conclusion.
\end{proof}

We are thus reduced to considering groups of Lie type. For these, we require
some notions and results from Deligne--Lusztig theory. Let $\bG$ be a simple
algebraic group of adjoint type and $F:\bG\to\bG$ a Steinberg endomorphism, with
finite group of fixed points $G=\bG^F$. Let $(\bG^*,F)$ be in duality
with~$\bG$. Any field automorphism $\sigma$ of $G$ is induced by a Frobenius
map $F_1:\bG\to\bG$ commuting with $F$ and the corresponding Frobenius map
$F_1:\bG^*\to\bG^*$ induces a field automorphism $\sigma^*$ of $G^*=\bG^{*F}$
(see \cite[\S5]{Tay18}).

\begin{lem}   \label{lem:non-fix}
 Let $G$ be as above of adjoint Lie type with abelian Sylow $p$-subgroups for a
 non-defining prime $p$, and $\sigma$ a field automorphism of $G$ of $p$-power
 such that $G\langle\sigma\rangle$ has non-abelian Sylow $p$-subgroups. Then
 $\sigma^*$ does not fix all classes of $p$-elements in~$G^*$.
\end{lem}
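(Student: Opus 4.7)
The plan is to transfer the hypothesized non-trivial $\sigma$-action on an abelian Sylow $p$-subgroup of $G$ into a non-trivial $\sigma^*$-action on a Sylow $p$-subgroup of $G^*$ via the duality between maximal tori of $\bG$ and $\bG^*$, and then to argue via an order comparison (exploiting the abelian Sylow hypothesis) that some $G^*$-class of $p$-elements is genuinely moved.

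Concretely, I first choose a Sylow $p$-subgroup $\tilde P$ of $G\langle\sigma\rangle$ containing $\sigma$ and write $\tilde P=P\langle\sigma\rangle$ with $P:=\tilde P\cap G$ an abelian Sylow $p$-subgroup of $G$; by hypothesis $\sigma$ normalizes $P$ but does not centralize it. Since $p$ is non-defining and $P$ is abelian, $\bT:=\cent\bG{P}^\circ$ is an $F$-stable maximal torus of $\bG$ containing $P$, and it is $\sigma$-stable because $\sigma$ normalizes $P$. The dual torus $\bT^*\le\bG^*$ is then $F$- and $\sigma^*$-stable, and Deligne--Lusztig theory supplies a canonical non-degenerate pairing $\bT^F\times(\bT^*)^F\to\CC^\times$ with respect to which $\sigma$ and $\sigma^*$ are adjoint. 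Restricted to $p$-parts, this puts $P$ and $P^*:=((\bT^*)^F)_p$ in Pontryagin duality with dual actions; in particular $\sigma^*$ fails to centralize the Sylow $p$-subgroup $P^*$ of $G^*$, so some $y\in P^*$ satisfies $y^{\sigma^*}\ne y$.

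It remains to upgrade this to the movement of a $G^*$-class of $p$-elements. For $y$ sufficiently regular in $\bT^*$, the torus $\bT^*$ is the unique maximal torus of $\bG^*$ through $y$, so any $g\in G^*$ with $y^g=y^{\sigma^*}$ lies in $\norm{G^*}{\bT^*}$. Hence, if $\sigma^*$ were to fix every class of $p$-elements, its action on generic elements of $P^*$ would coincide with that of elements of the relative Weyl group $W:=\norm{G^*}{\bT^*}/(\bT^*)^F$. The key order comparison is that the image of $W$ in $\Aut(P^*)$ sits inside $\norm{G^*}{P^*}/\cent{G^*}{P^*}$, which has $p'$-order because $P^*$ is an abelian Sylow $p$-subgroup, whereas the image of $\sigma^*$ in $\Aut(P^*)$ is a non-trivial $p$-element. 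The main obstacle is reducing the \emph{a priori} $y$-dependent Weyl corrections to a single element of $W$ (so that the $p$- vs.\ $p'$-order clash applies); this should follow from the algebraic commutation of $\sigma^*$ with $W$ on $\bT^*$ (up to the twist $w\mapsto w^{\sigma^*}$) together with the triviality of Weyl stabilizers at regular $p$-elements.
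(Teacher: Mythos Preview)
Your overall strategy matches the paper's: embed the abelian Sylow $p$-subgroup in an $F$-stable maximal torus, transfer the non-trivial $\sigma$-action to a non-trivial $\sigma^*$-action on the dual Sylow $p$-subgroup $P^*$, and then use a $p$ versus $p'$ order comparison against Weyl-type fusion control to conclude that some class moves. But your final paragraph does not close.

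The substantive gap is in the fusion/commutation step. First, basing the reduction on \emph{regular} $p$-elements is unsafe: there is no guarantee that $P^*$ contains any element regular in $\bT^*$, and even if some exist, the $y$ moved by $\sigma^*$ need not be among them. The paper sidesteps this by invoking a fusion-control result (Malle, \emph{Represent.\ Theory} 2007, Prop.~5.11): the relative Weyl group $W=N_{G^*}(\bT_d^*)/C_{G^*}(\bT_d^*)$ of a Sylow $d$-torus $\bT_d^*$ (with $d$ the order of $q$ modulo $p$) controls \emph{all} $G^*$-fusion in $P^*$, not just that of regular elements. Second, your proposed reduction of the $y$-dependent Weyl corrections to a single $w$ is only a sketch; what is actually needed is that $\sigma^*$ commutes with the $W$-action on $P^*$. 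The paper obtains this cleanly: writing $q=q_1^{p^a}$, the orders of $q$ and $q_1$ modulo $p$ coincide, so the relative Weyl group of $\bT_d^*$ computed inside the subfield subgroup $G_1^*=C_{G^*}(\sigma^*)$ already equals $W$, whence $\sigma^*$ literally centralises representatives of $W$. Then the $W$-orbits on $P^*$ (which are the $G^*$-classes intersected with $P^*$) have $p'$-size, while $\sigma^*$ has a non-trivial orbit of $p$-power size on $P^*$; since $\sigma^*$ permutes the $W$-orbits, it cannot stabilise them all.

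On the first half, your Pontryagin-duality transfer of non-centralisation is fine and arguably more conceptual; the paper instead argues via the order equality $|C_G(\sigma)|=|C_{G^*}(\sigma^*)|$ that the subfield subgroup $C_{G^*}(\sigma^*)$ cannot contain a Sylow $p$-subgroup of $G^*$, hence $\sigma^*$ fails to centralise $P^*$.
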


\begin{proof}
First note that our assumptions on $G$ force $p>2$. Let $F_1:\bG\to\bG$ be a
Frobenius map inducing $\sigma$ and commuting with~$F$. Since a Sylow
$p$-subgroup $P$ of $G$ is abelian, it is contained in an $F$-stable maximal
torus $\bT$ of~$\bG$ (see \cite[Thm.~25.14]{MT}), which we may assume to be
$F_1$-stable. In fact, if $F$ is not very twisted, $\bT$ contains a Sylow
$d$-torus $\bT_d$ of $\bG$, where $d$ is the order of $q$ modulo~$p$ and $q$ is
the underlying field size of $G$, while in the case of Suzuki and Ree groups, it
contains a Sylow $\Phi$-torus which again we denote $\bT_d$, where $\Phi$ is the
minimal polynomial over $\QQ(\sqrt{2})$ respectively $\QQ(\sqrt{3})$ of a
primitive $d$th root of unity (see \cite[3F]{BM}). Since $G$ and $G^*$,
as well as $\bT^F$ and $\bT^{*F}$, have the same order, $\bT^{*F}$ contains a
Sylow $p$-subgroup of~$G^*$, which is hence also abelian. Let $\sigma^*$ be the
dual automorphism of~$G^*$ induced by $F_1:\bG^*\to\bG^*$ which we may and will
assume to stabilise $\bT^*$. Let $P^*$ be the Sylow $p$-subgroup of $G^*$
contained in $\bT^{*F}$.
\par
By assumption, $\sigma$ acts non-trivially on~$P$. Thus the centraliser
$G_1:=C_G(\sigma)$ of $\sigma$, a subfield subgroup of~$G$, does not contain a
Sylow $p$-subgroup of $G$. Since $\sigma^*$ is dual to $\sigma$,
$G_1^*:=C_{G^*}(\sigma^*)$ has the same order as $G_1$, so $\sigma^*$ does also
not centralise~$P^*$. Now $G^*$-fusion in $P^*$ is controlled by the relative
Weyl group $W=N_{G^*}(\bT_d^*)/C_{G^*}(\bT_d^*)$ of the Sylow torus $\bT_d^*$,
by \cite[Prop.~5.11]{Ma07}. Let $q_1$ be the underlying field size of
$\bG^{F_1}$, so $q=q_1^{p^a}$ for some $a\ge1$. Since $q$ and $q_1$ have the
same order~$d$ modulo~$p$, the relative Weyl groups $W$ in $G^*$ and
$W_1=N_{G_1^*}(\bT_d^*)/C_{G_1^*}(\bT_d^*)$ in $G_1^*$ agree, that is,
$\sigma^*$ commutes with the action of $W$ on $P^*$. Now $W$ has order prime
to $p$ as $\bT_d^{*F}$ contains a Sylow $p$-subgroup of $\bG^{*F}$, while
$\sigma^*$ has some non-trivial orbit, of length divisible by~$p$, on $P^*$.
So $\sigma^*$ induces additional fusion on elements of $P^*$ and
our claim follows.
\end{proof}

\begin{thm}   \label{thm:as}
 Let $p$ be a prime, $S$ be a non-abelian simple group and $S\le A\le\Aut(S)$
 such that $A/S$ is a $p$-group. Assume that all characters in the principal
 $p$-block of $A$ have degree prime to $p$. Then the Sylow $p$-subgroups of $A$
 are abelian.
\end{thm}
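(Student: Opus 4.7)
The plan is to reduce to the quasi-simple case via Kessar--Malle~\cite{KM17}, and then, under the assumption $A>S$, to contradict the hypothesis by exhibiting a character of $B_0(A)$ of $p$-divisible degree. First, every $\theta\in\irr{B_0(S)}$ lies below some $\chi\in\irr{B_0(A)}$ (by Brauer's third main theorem and the standard covering argument used repeatedly in the proof of Theorem~\ref{thm:red}); since $\theta(1)\mid\chi(1)$, the hypothesis passes to $S$, and the main result of \cite{KM17} forces $S$ to have abelian Sylow $p$-subgroups.

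Assuming $A>S$, I would pick $\sigma\in A$ inducing an outer automorphism of $S$ of order~$p$ and invoke Lemma~\ref{lem:aut}. This splits into the main case, where $\sigma$ is a pure field automorphism and $p$ does not divide the order of the group of outer diagonal automorphisms of $S$, and the three exceptional pairs (1)--(3). In the exceptional cases I would separate according to the diagonal component of $\sigma$: if $\sigma$ carries a non-trivial diagonal part then $A$ contains an appropriate subgroup of $\mathrm{PGL}_2(q)$, $\mathrm{PGL}_3(q)$ or $\mathrm{PGU}_3(q)$, and direct inspection of the well-known character tables yields an irreducible character of $B_0(A)$ of $p$-divisible degree, contradicting the hypothesis; otherwise $\sigma$ is a pure field automorphism and Lemma~\ref{lem:non-fix} applies exactly as in the main case treated below.

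For the main case, let $\bG$ be the adjoint simple algebraic group with Steinberg endomorphism $F$ so that $S\unlhd G:=\bG^F$ and $|G:S|$ is coprime to~$p$; by duality $p$ also does not divide $|Z(\bG^{*F})|$. The automorphism $\sigma$ extends to $G$, and by Lemma~\ref{lem:non-fix} the dual $\sigma^*$ moves some $G^*$-class of $p$-elements; pick a $p$-element $u\in G^*$ with non-$\sigma^*$-stable class. Because the Sylow $p$-subgroups of $G^*$ are abelian, $C_{G^*}(u)$ contains a Sylow $p$-subgroup, so $[G^*:C_{G^*}(u)]$ is coprime to~$p$. Jordan decomposition produces the semisimple character $\chi_u\in\irr G$ labelled by $(u,1)$: it has $p'$-degree $[G^*:C_{G^*}(u)]$, lies in $B_0(G)$ since $u$ is a $p$-element, and is not $\sigma$-stable because its Jordan label is not. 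Since $p\nmid|G:S|$ and $G^*$ has no non-trivial central $p$-element, no twist of $\chi_u$ by a linear character of $G/S$ lies in $\cE(G,u^{\sigma^*})$, so the $S$-constituents of $\chi_u|_S$ and of $\chi_u^\sigma|_S$ differ as sets; consequently some constituent $\theta\in\irr{B_0(S)}$ of $\chi_u|_S$ has $p'$-degree and satisfies $\theta^\sigma\neq\theta$, hence is not $A$-stable.

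Since $A/S$ is a $p$-group, $B_0(A)$ is the unique block of $A$ covering $B_0(S)$, so any $\hat\chi\in\irr{A|\theta}$ lies in $B_0(A)$; and by Clifford theory $\hat\chi(1)=[A:I_A(\theta)]\cdot\tau(1)$ for some $\tau\in\irr{I_A(\theta)|\theta}$. Since $A/S$ is a $p$-group and $\theta$ is not $A$-stable, $[A:I_A(\theta)]$ is a non-trivial $p$-power, so $p\mid\hat\chi(1)$, yielding the desired contradiction. The step I expect to be most delicate is the descent from $\chi_u$ on $G$ to a character of $S$ that is genuinely moved by $\sigma$ (rather than only moved up to $G$-conjugacy), along with the ad hoc treatment of the three small-rank exceptional pairs from Lemma~\ref{lem:aut}.
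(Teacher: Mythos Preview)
Your overall strategy coincides with the paper's: use \cite{KM17} for $S$, then in the almost simple case produce a character of $B_0(S)$ moved by an automorphism in $A$, hence a character of $B_0(A)$ of degree divisible by~$p$. The descent argument from the semisimple character $\chi_u$ of the adjoint group $G$ to a constituent $\theta$ of $S$ moved by $\sigma$ is essentially the paper's argument (the paper phrases it via the $p'$ number of constituents of $\chi_u|_S$, but the content is the same). However, there are two genuine gaps.

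\textbf{The defining characteristic case is omitted.} Your ``main case'' immediately invokes Lemma~\ref{lem:non-fix}, whose hypotheses include that $p$ is a \emph{non-defining} prime for~$G$. But Lemma~\ref{lem:aut} does not rule out the defining characteristic: when $S=\PSL_2(p^f)$ with $f>1$, Sylow $p$-subgroups of $S$ are abelian, $p$ does not divide the diagonal automorphism order, and $S$ has field automorphisms of $p$-power order, so this lands squarely in your main case --- yet Lemma~\ref{lem:non-fix} is inapplicable. The paper handles this separately by observing that the cyclic non-split torus of $\PSL_2(q)$ has order not realised in any proper subfield subgroup, so a field automorphism moves a Deligne--Lusztig character of degree $q-1$ in $B_0(S)$.

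\textbf{The contradiction is set up under the wrong hypothesis.} You write ``under the assumption $A>S$, to contradict the hypothesis'', but $A>S$ alone does not lead to a contradiction: there exist almost simple $A>S$ with $A/S$ a $p$-group and $A$ having abelian Sylow $p$-subgroups (e.g.\ $S=\PSL_2(2^7)$, $p=7$, $A=S.7$), and for these the hypothesis of the theorem holds by \cite{KM13}. You must instead assume that Sylow $p$-subgroups of $A$ are non-abelian. This is not merely cosmetic: it is exactly the hypothesis of Lemma~\ref{lem:non-fix} that you need, namely that $G\langle\sigma\rangle$ has non-abelian Sylow $p$-subgroups. Having picked $\sigma$ of order~$p$ rather than a generator of the (cyclic) group $A/S$, you would still need to check that this minimal $\sigma$ already fails to centralise a Sylow $p$-subgroup of $G$; the cleanest fix is to take $\sigma$ to generate $A/S$.

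A minor point: in the exceptional cases (1)--(3) of Lemma~\ref{lem:aut}, the congruence conditions on $q$ force the field automorphism group to have order prime to~$p$, so your ``otherwise $\sigma$ is a pure field automorphism'' branch is empty. This is fortunate, since in cases~(2) and~(3) the adjoint group $G=\mathrm{PGL}_3(q)$ or $\mathrm{PGU}_3(q)$ has \emph{non-abelian} Sylow $3$-subgroups, so Lemma~\ref{lem:non-fix} would not apply there in any event.
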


\begin{proof}
We will assume that $A$ has non-abelian Sylow $p$-subgroups and argue by
contradiction. The case $A=S$ is the main result of \cite{KM17}, so we may
assume that $A\ne S$, $S$ has abelian Sylow $p$-subgroups, and we have to
exhibit a character in the principal $p$-block of $S$ that is not $A$-invariant.
\par
By Lemma~\ref{lem:aut} we may assume that $S$ is of Lie type.
We first discuss the groups showing up in Lemma~\ref{lem:aut}(1)--(3).
For $S=\PSL_2(q)$ the outer automorphism group is generated by diagonal and
field automorphisms. As $q\not\equiv1\pmod8$, $S$ does not have even order field
automorphisms. Now the principal 2-block of $S$ contains two irreducible
characters of degree $(q+\eps)/2$, where $q\equiv\eps\pmod4$, $\eps\in\{\pm1\}$,
that are not invariant under the diagonal automorphism (see e.g.
\cite[Lemma~15.1]{IMN}). For $\PSL_3(q)$ with $3||(q-1)$,
the diagonal automorphism of order~3 moves the three characters of degree
$(q+1)(q^2+q+1)/3$, and similarly for $\PSU_3(q)$ with $3||(q+1)$ the three
characters of degree $(q-1)(q^2-q+1)/3$, (see e.g. \cite[Lemmas~3.5,
3.11]{Ma08}). For the given congruences, $S$ does not have field automorphisms
of 3-power order. Thus, we may now assume that $S$ is not one of the exceptions
in Lemma~\ref{lem:aut}.
\par
If $p$ is the defining characteristic of $S$, then as $S$ has abelian Sylow
$p$-subgroups, we have $S=\PSL_2(q)$ with $q=p^f>p$. Here, all irreducible
characters apart from the Steinberg character lie in the principal block. Let
$\chi$ be a Deligne--Lusztig character of degree~$q-1$ corresponding to a
semisimple element $s$ generating the non-split maximal torus $T$ of order
$(q+1)/(2,q-1)$ in the dual group. A field automorphism $\sigma$ of $S$
centralises the corresponding group over a subfield; since no proper subfield
group contains a cyclic subgroup of order $(q+1)/(2,q-1)$, $\sigma$ must act
non-trivially on~$T$. Thus, $\chi$ is not $\sigma$-stable.
\par
So $p$ is not the defining characteristic of $S$. Let $\bG$ and $G=\bG^F$ be as
above such that $S=[G,G]$. This is possible unless $S$ is the Tits simple group,
which has no non-trivial field automorphisms. Any field automorphism of $S$
extends to $G$. Now note that Sylow $p$-subgroups of $G$ are also abelian,
since $G$ acts by diagonal automorphisms on $S$ and $p$ does not divide their
order. Then by Lemma~\ref{lem:non-fix} there is some conjugacy class of a
$p$-element $s\in G^*$ not fixed by $\sigma^*$. By \cite[Prop.~7.2]{Tay18} this
implies that the corresponding Lusztig series $\cE(G,s)$ is not fixed by
$\sigma$. In particular the semisimple characters in $\cE(G,s)$, which lie in
the principal $p$-block by \cite[Thm.]{CE94} are not fixed by $\sigma$. Since
$p$ does not divide
the order of the group of outer diagonal automorphisms of $G$, the number of
characters of $S$ below $G$ is not divisible by $p$, hence there is some
character of $S$ moved by $\sigma$, and we conclude.
\end{proof}

Our main theorem now follows by combining Theorems~\ref{thm:red}
and~\ref{thm:as}.



\begin{thebibliography}{13}

\bibitem{Al}
{\sc J. L. Alperin}, Isomorphic blocks. \emph{J. Algebra \bf43} (1976),
  694--698.

\bibitem{Br55}
{\sc R. Brauer}, Number theoretical investigations on groups of finite order.
  \emph{Proceedings of the International Symposium on Algebraic Number Theory}
  (Tokyo and Nikko, 1955), pp.~55--62. Science Council of Japan, Tokyo, 1956. 

\bibitem{Br63}
{\sc R. Brauer}, Representations of finite groups. \emph{Lectures on Modern
  Mathematics, vol. I}, pp.~133--175. John Wiley \& Sons, 1963.

\bibitem{BM}
{\sc M. Brou\'e, G. Malle}, Th\'eor\`emes de Sylow g\'en\'eriques pour les
  groupes r\'eductifs sur les corps finis. \emph{Math. Ann. \bf292} (1992),
  241--262. 

\bibitem{CE94}
{\sc M. Cabanes and M. Enguehard}, On unipotent blocks and their ordinary
  characters. \emph{Invent. Math. \bf117} (1994), 149--164. 

\bibitem{D}
{\sc E. C. Dade}, Remarks on isomorphic blocks. \emph{J. Algebra \bf45} (1977),
  254--258.

\bibitem{GLS}
{\sc D. Gorenstein, and R. Lyons, and R. Solomon}, \emph{The classification
  of the finite simple groups}. American Mathematical Society, 40, Number 3.
  Providence, RI, 1998.

\bibitem{IS}
{\sc I. M. Isaacs}, Characters of $\pi$-separable groups. \emph{J. Algebra
  \bf86} (1984), 98--128.

\bibitem{Is06} 
{\sc  I. M. Isaacs}, \emph{Character Theory of Finite Groups}. AMS Chelsea
  Publishing, Providence, RI, 2006. 

\bibitem{IMN}
{\sc M. Isaacs, G. Malle, G. Navarro}, A reduction theorem for the McKay
  conjecture. \emph{Invent. Math. \bf170} (2007), 33--101.

\bibitem{KM13}
{\sc R. Kessar, G. Malle}, Quasi-isolated blocks and Brauer's height zero
  conjecture.  \emph{Ann. of Math. (2) \bf 178} (2013), 321--384.  
  
\bibitem{KM17}
{\sc R. Kessar, G. Malle}, Brauer's height zero conjecture for quasi-simple
  groups. \emph{J. Algebra \bf475} (2017), 43--60.

\bibitem{KimSa}
{\sc W. Kimmerle, R. Sandling}, Group theoretic determination of certain Sylow
  and Hall subgroups and the resolution of a question of R. Brauer. \emph{J.
  Algebra \bf 171} (1995), 329--346.

\bibitem{Ma07}
{\sc G. Malle}, Height~0 characters of finite groups of Lie type.
  \emph{Represent. Theory \bf11} (2007), 192--220.

\bibitem{Ma08}
{\sc G. Malle}, Extensions of unipotent characters and the inductive McKay
  condition. \emph{J. Algebra \bf320} (2008), 2963--2980. 

\bibitem{MT}
{\sc G. Malle, D. Testerman}, \emph{Linear Algebraic Groups and Finite Groups
  of Lie Type}. Cambridge University Press, Cambridge, 2011.

\bibitem{Mi86}
{\sc G. Michler}, A finite simple group of Lie type has $p$-blocks with
  different defects if $p \ne 2$. \emph{J. Algebra \bf 104} (1986), 220--230.  

\bibitem{N}
{\sc G. Navarro}, \emph{Characters and Blocks of Finite Groups}. Cambridge
  University Press, Cambridge, 1998.
 
\bibitem{NS14}
{\sc G. Navarro, B. Sp\"ath}, On Brauer's height zero conjecture. \emph{J. Eur.
  Math. Soc. \bf 16} (2014), 695--747.  

\bibitem{NST}
{\sc G. Navarro, R. Solomon, P. H. Tiep}, Abelian Sylow subgroups of finite
  groups, II. \emph{J. Algebra \bf 421} (2015), 3--11.

\bibitem{NT}
{\sc G. Navarro, P. H. Tiep}, Brauer's height zero conjecture for the 2-blocks
  of maximal defect. \emph{J. reine angew. Math. \bf 669} (2012), 225--247.  

\bibitem{NT2}
{\sc G. Navarro, P. H. Tiep}, Characters of relative $p'$-degree with respect
  to a normal subgroup. \emph{Ann. of Math. (2) \bf 178} (2013), 1135--1171.

\bibitem{S13}
{\sc B. Sp\"ath}, A reduction theorem for the Alperin--McKay conjecture.
  \emph{J. reine angew. Math. \bf 680} (2013), 153--189. 

\bibitem{Tay18}
{\sc J. Taylor}, Action of automorphisms on irreducible characters of symplectic
  groups. \emph{J. Algebra \bf505} (2018), 21--246. 

\bibitem{Wa69}
{\sc J. H. Walter}, The characterization of finite groups with abelian Sylow
  2-subgroups. \emph{Ann. of Math. (2) \bf89} (1969), 405--514. 
  
\bibitem{Wi86}
{\sc W. Willems}, $p^*$-Theory and modular representation theory.
  \emph{J. Algebra \bf104} (1986), 135--140.

\end{thebibliography}
\end{document}